 \newcommand{\eps}{\varepsilon}
  \newcommand{\bN}{\mathbb{N}}
  \newcommand{\bR}{\mathbb{R}}
 \newcommand{\cA}{\mathcal{A}}
 \newcommand{\cP}{\mathcal{P}}
 \newcommand{\Ds}{D^{\ast}}
 \newcommand{\cs}{c^{\ast}}
 \newcommand{\fs}{f^{\ast}_{\text{strong}}}
 \newcommand{\fss}{f^{\ast}_{\text{strict}}}
 \newcommand{\Gs}{\Gamma^{\ast}}
 \newcommand{\xis}{\xi^{\ast}}
 \newcommand{\I}{[0,1)}
 \newcommand{\ox}{\overline{x}}
   \newcommand{\on}{\overline{n}}
  \newcommand{\ux}{\underline{x}}
  \newcommand{\un}{\underline{n}}
\newcommand{\Qp}{Q^{\prime}}
  \newcommand{\Qpp}{Q^{\prime\prime}}
  \newcommand{\tA}{\tilde{A}}
  \newcommand{\txi}{\tilde{\xi}}
 \newcommand{\tf}{\tilde{f}}
 \newcommand{\tG}{\tilde{\Gamma}}
 \newcommand{\cc}{\chi_{\text{crit}}}
  \newcommand{\cm}{\chi_{\text{min}}}
 \newcommand{\ud}{\, \mathrm{d}}
\theoremstyle{plain}		\newtheorem{Theorem}{Theorem}
				\newtheorem{Lemma}{Lemma}
\theoremstyle{definition}	
				\newtheorem{Definition}{Definition}
\theoremstyle{plain}		\newtheorem*{Theorem*}{Theorem}
				\newtheorem*{Property*}{Property}
\theoremstyle{definition}	\newtheorem*{Proposition*}{Proposition}
				\newtheorem*{Remark*}{Remark}
				\newtheorem*{Definition*}{Definition}				
 \title{An improved bound for the star discrepancy of sequences in the unit interval}
 \date{}
\author{Gerhard Larcher\thanks{The authors are supported by the Austrian Science Fund (FWF),  Project F5507-N26, which is a part of the Special Research Program ``Quasi-Monte Carlo Methods: Theory and Applications''}  \and 
  Florian Puchhammer\footnotemark[1]}
\begin{document}

\maketitle

 \begin{abstract}
It is known that there is a constant $c>0$ such that for every sequence $x_1, x_2,\ldots$ in $\I$ we have for the star discrepancy $\Ds_N$ of the first $N$ elements of the sequence that $N\Ds_N\geq c\cdot \log N$ holds for infinitely many $N$. Let $\cs$ be the supremum of all such $c$ with this property. We show $\cs>0.065664679\ldots$, thereby slightly improving the estimates known until now.
 \end{abstract}

\section{Introduction and statement of the result}
\label{sec:intro}
Let $x_1, x_2,\ldots$ be a point sequence in $\I$. By $\Ds_N$ we denote the star discrepancy of the first $N$ elements of the sequence, i.e., 
\begin{equation*}
 \Ds_N=\sup_{x\in[0,1]}\left|\frac{\cA_N(x)}{N}-x\right|,
\end{equation*}
where $\cA_N(x):=\#\{1\leq n\leq N|x_n<x\}$. The sequence $x_1,x_2,\ldots$ is uniformly distributed in $\I$ iff $\lim_{N\to\infty}\Ds_N=0$.

In 1972 W.~M.~Schmidt \cite{SchIrr72} showed that there is a positive constant $c$ such that for all sequences $x_1, x_2,\ldots$ in $\I$ we have
\begin{equation}
\label{eqn:dsschmidt}
 \Ds_N>c\cdot\frac{\log N}{N}
\end{equation}
for infinitely many $N$. The order $\frac{\log N}{N}$ is best possible. There are many  known sequences for which $\Ds_N\leq c^{\prime}\cdot\frac{\log N}{N}$ holds for all $N$ with an absolute constant $c^{\prime}$.
For all necessary details on discrepancy and low-discrepancy sequences see the monographs \cite{KuiUni74} or \cite{NieRan92}.

So it makes sense to define the \emph{one-dimensional star discrepancy constant} $\cs$ to be the supremum over all $c$ such that (\ref{eqn:dsschmidt}) holds for all sequences $x_1, x_2,\ldots$ in $\I$ for infinitely many $N$. Or, in other words,
\begin{equation*}
 \cs:=\inf_{\omega}\limsup_{N\to\infty}\frac{N\Ds_N(\omega)}{\log N},
\end{equation*}
where the infimum is taken over all sequences $\omega=x_1,x_2,\ldots$ in $\I$, and where $\Ds_N(\omega)$ denotes the star discrepancy of the first $N$ elements of $\omega$.

The currently best known estimates for $\cs$ are
\begin{equation*}
 0.0646363\ldots\leq\cs\leq0.222\ldots
\end{equation*}
The upper bound was given by Ostromoukhov \cite{OstRec08} (thereby slightly improving earlier results of Faure (see, for example, \cite{FauGoo92})). The lower bound was given by Larcher \cite{LarOnt14}.

It is the aim of this paper to improve the above lower bound for $\cs$. That is, we prove
\begin{Theorem}
\label{thm:main}
For the one-dimensional star discrepancy constant we have
\begin{equation*}
 \cs\geq0.065664679\ldots
 \end{equation*}
\end{Theorem}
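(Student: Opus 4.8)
The plan is to establish the universal lower bound directly. Since $\cs=\inf_{\omega}\limsup_{N\to\infty} N\Ds_N/\log N$, it suffices to show that $\limsup_{N\to\infty} N\Ds_N(\omega)/\log N\geq c_0$ for \emph{every} fixed sequence $\omega=x_1,x_2,\ldots$ in $\I$, where $c_0:=0.065664679\ldots$. The central object I would track is the signed counting function $\Delta_N(x):=\cA_N(x)-Nx$, whose graph is a sawtooth that starts and ends at $0$ on $[0,1]$, jumps up by $1$ at each $x_n$, and otherwise decreases with slope $-N$; with this notation $N\Ds_N=\max\{\sup_x\Delta_N(x),\,\sup_x(-\Delta_N(x))\}$. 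The structural fact I would exploit is the evolution rule $\Delta_{N+1}(x)-\Delta_N(x)=\mathbf{1}[x_{N+1}<x]-x$, which governs how imbalance at a fixed test point accumulates as the index grows, and which makes the smallness of the discrepancy a genuine constraint rather than a pointwise coincidence.

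First I would set up a multiscale descent. Working over a geometric range of indices $[\un,\on]$ and an associated test interval $[\ux,\ox]$, I would argue that if $N\Ds_N$ stays below $c\log N$ at every \emph{intermediate} scale of the block, then $\Delta$ must carry a definite, localizable amount of imbalance; this yields a refined pair of sub-configurations $\Qp,\Qpp$ to which the same hypothesis applies with a controlled loss. Iterating the descent on the order of $\log N$ times and summing the per-step contributions produces a lower bound for $\sup_{N}\Delta_N$, hence for $N\Ds_N$, that grows like a constant times $\log N$. The per-step gain is governed by the ratio $\chi$ between consecutive scales and by the extremal ``profile'' describing how accumulated imbalance is distributed across the test interval; I would encode the best achievable profiles by the functions $\fs$ and $\fss$, corresponding to the non-strict and strict forms of the scale-to-scale inequality.

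The lower bound for the admissible $c$ then emerges as the value of a variational problem: maximize the guaranteed per-$\log$ accumulation over the scaling parameter $\chi$ and over the admissible profile. I expect the optimum to be attained at a critical ratio $\chi=\cc$, with $\cm$ delimiting the feasible range, at a critical imbalance level $\xi=\xis$, and with a normalization constant $\Gs$ fixed by the fixed-point equation for the extremal profile. Evaluating this optimum yields the stated value $0.065664679\ldots$, and the modest improvement over the previous bound $0.0646363\ldots$ should come precisely from carrying out this optimization over a richer class of profiles (the distinction between $\fs$ and $\fss$) rather than from a new mechanism.

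The main obstacle, and the place where all the sharpness resides, is making the scale-to-scale inequality as tight as possible: one must show that the hypothesized smallness of $N\Ds_N$ at all intermediate scales of a block—not merely at its two endpoints—forces the imbalance, and that passing from a block to $\Qp,\Qpp$ loses essentially nothing. Controlling this interplay across infinitely many nested scales without sacrificing constant factors, and then certifying the final numerical optimization over $\chi$ and the profile (most safely by interval arithmetic), is the crux of the argument.
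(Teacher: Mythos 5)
There is a genuine gap: what you have written is a strategy narrative, not a proof, and the one place where the paper's improvement actually happens is precisely the part you defer as ``the crux.'' Concretely, the paper's scale-to-scale inequality is not left abstract: for $N=[a^t]$ one splits the index set into $A_0=\{1,\ldots,[a^{t-1}]\}$, $A_2=\{[a^t]-[a^{t-1}]+1,\ldots,[a^t]\}$ and the middle block $A_1$, defines $f(x)=\max_{n\in A_2}D_n(x)-\max_{n\in A_0}D_n(x)$, establishes structural properties (piecewise linearity, slope bounds, forced unit jumps at the $x_i$ with $i\in A_1$), and telescopes over the $t$ nested scales to get $\int_0^1(\max_n D_n-\min_n D_n)\ud x\geq t\cdot\min_g\int_0^1|g|\ud x$ over all profiles $g$ satisfying those properties. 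Your proposal never writes down any such inequality, so the $\log N$ growth is asserted rather than derived. More importantly, you misidentify the source of the improvement: you say it comes from optimizing ``over a richer class of profiles,'' but the logic runs the other way. The adversarial profile is \emph{minimized} over, so the gain over $0.0646363\ldots$ comes from proving a \emph{new constraint} that every realizable $f$ must satisfy --- Lemma~\ref{lemma:bendcond}, a ``bend condition'' at the points $x_j$ with $j\in A_2$, which says that wherever the slope of $f$ becomes flatter than $s_0-k$ to the right of such a discontinuity, $f$ is forced to lie above a line of slope $s_0$ to the left. This shrinks the admissible class (from strongly to strictly admissible) and therefore raises the minimum $\int_0^1|\fss|\ud x$. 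Without isolating and proving such a constraint, your scheme can at best reproduce the old bound.

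Several of the paper's symbols are also used with the wrong meaning in your sketch, which signals that the underlying structure has not been reconstructed: $\Qp$ and $\Qpp$ (and the $Q_0,Q_1,Q_2$ of this paper) are \emph{types of subintervals} of $[0,1)$ cut out by consecutive zeros of the extremal profile, each containing exactly one discontinuity --- not sub-configurations of indices to which one recurses; $\Gs$ is the set of jump locations of $\fss$, not a normalization constant solving a fixed-point equation; and the quantities $\chi_0,\chi_1,\chi_2^{(n)}$ are interval lengths subject to a linear constraint, so the final step is a constrained quadratic \emph{minimization} via Lagrange multipliers (with the boundary value $\cm$ of $\chi_1$ attained because the critical point $\cc$ falls outside the feasible range), followed by an optimization over the single real parameter $a$ (yielding $a=3.62079\ldots$). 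None of the quantitative work --- the per-interval lower bounds for $Q_0$, $Q_1$, $Q_2^{(n)}$, the counting of discontinuities ($a^t-1$ of them), the evaluation of the harmonic sum by $\log(1+1/(a-2))$, and the final numerical optimization --- appears in your proposal, so the stated constant $0.065664679\ldots$ is not actually obtained.
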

\bigskip
The idea of the proof follows a method introduced by Liardet \cite{LiaDis79} which was also used by Tijdeman and Wagner in \cite{TijSeq80} and by Larcher in \cite{LarOnt14}.

\section{Main ideas and proof of Theorem~\ref{thm:main}}
\label{sec:idea}

We will heavily make use of the idea, the notation, and most of the results used and obtained in \cite{LarOnt14}. In this paper we extend the analysis carried out in the aforementioned paper. In this section we therefore repeat the most important notation and facts from \cite{LarOnt14} and explain how we extend the method to prove Theorem~\ref{thm:main}.

We consider a finite point set $\cP=\{x_1, x_2,\ldots,x_N\}$ in $\I$ with $N=[a^{t}]$ for some real $a$, $3\leq a\leq3.7$, and some $t\in\bN$. Further, we divide the index-set $A=\{1,2,\ldots,N\}$ into index-subsets $A_0, A_1, A_2$, where $A_0=\{1,2,\ldots,[a^{t-1}]\}$, $A_2=\{[a^{t}]-[a^{t-1}]+1,\ldots,[a^{t}]\}$, and $A_1=A\setminus(A_0\cup A_2)$.

For simplicity, let us first of all assume that $a^{t}$ and $a^{t-1}$ are integers (of course this can only happen if $a=3$). For $x\in\I$ we consider the discrepancy function $D_n(x):=\#\{i\leq n|x_i<x\}-nx=\cA_n(x)-nx$ and we define the function $f(x):=\max_{n\in A_2}D_n(x)-\max_{n\in A_0}D_n(x)$.

In \cite{LarOnt14} it was shown that the function $f$ has the following properties:
\begin{enumerate}[(i)]
 \item $f(0)=f(1)=0$.
 \item $f$ is piecewise linear, piecewise monotonically decreasing, and $|f|$ is bounded by $a^{t}$.
 \item $f$ is left-continuous and each discontinuity constitutes a positive jump.
 \item The slope of $f$ is always between $-a^{t}$ and $s_0:=-a^{t-1}(a-2)$.
 \item If $f$ is continuous on $[x,y]$ then the slope of $f(x)$ and $f(y)$ can differ at most by $a^{t-1}$.
 \item $f$ has discontinuities with a jump of height at least 1 in all points $x_i$ with $i\in A_1$.
\end{enumerate}

Further it was shown in \cite[Lemma 2.11]{LarOnt14} that for given $a$ and $t$ there exists a function $\fs\!:~[0,1]\rightarrow\bR$ satisfying (i)--(vi) for some $x_1,\ldots,x_N$ (we say $\fs$ is \emph{strongly admissible}) such that
\begin{equation*}
 \int_{0}^{1}\left|\fs(x)\right|\ud x=\min_{g \text{ strongly admissible}} \int_{0}^{1}\left|g(x)\right|\ud x,
\end{equation*}
and (in \cite[Lemma 2.14]{LarOnt14}) that for every $\eps>0$ and (now arbitrary) $a\in[3,4]$ and $t$ with $t\geq t(\eps)$
\begin{equation*}
 \int_{0}^{1}|\fs(x)|\ud x \geq \frac{(a-2)(8a+3)}{8(1-2a)^2}-\eps.
\end{equation*}
Finally, we finished the proof of the Theorem in \cite{LarOnt14} in the following way:

It was shown that (see Section 3 in \cite{LarOnt14})
\begin{align*}
 \int_{0}^{1}\left( \max_{n\in A}D_n(x)-\right.&\left.\min_{n\in A}D_n(x)  \right)\ud x 
 \geq t\int^{1}_{0}\left|\fs(x)\right|\ud x
 \geq\\
 &\geq t\left( \frac{(a-2)(8a+3)}{8(1-2a)^2}-\eps \right)
  \geq\\
 &\geq \frac{\log N}{\log a}\cdot\left(\frac{(a-2)(8a+3)}{8(1-2a)^2}-\eps\right)
 \geq\\
 &\geq2\log N\cdot0.0646363\ldots
\end{align*}
if we choose $a=3.71866\ldots$ and $N$ large enough. Hence there exist $x\in[0,1]$ and $n\leq N$ with
\begin{equation*}
 D_n(x)\geq 0.0646363\ldots\cdot\log N
 \end{equation*}
and Theorem 1.1 from \cite{LarOnt14} follows.

To improve the above result from \cite{LarOnt14} in the present paper we proceed as follows: We show that $f$ has to satisfy an even more restrictive property (vi$^{\prime}$) instead of property (vi) and we call a function $g$ satisfying (i)--(v) and (vi$^{\prime}$) \emph{strictly admissible}. Moreover, we show that there exists a strictly admissible function $\fss\!:~[0,1]\rightarrow\bR$ with
\begin{equation*}
 \int_{0}^{1}\left|\fss(x)\right|\ud x=\min_{g\text{ strictly admissible}}\int_{0}^{1}\left|g(x)\right|\ud x
\end{equation*}
and
\begin{equation*}
 \int_{0}^{1}\left|\fss(x)\right|\ud x\geq
\frac{(a-2)\left(12a+9+(a-2)(4a-3)\log\left(1+\frac{1}{a-2}\right)\right)}{a\left(a-\frac{1}{2}\right)^2\left(3+(a-2)\log\left(1+\frac{1}{a-2}\right)\right)}-\eps
\end{equation*}
for all $a\in(3,3.7]$ and $t\geq t(\eps)$.

Note that, in the following, we will work with $a^{t}$ and $a^{t-1}$ as if they were integers and we will obtain the above result without ``$-\eps$'' and for all $t\geq t_0$ in this case. For working with $[a^{t-1}]$ and $[a^t]$ instead of $a^{t-1}$ and $a^{t}$ we then easily obtain the stated result.

In the very same way as in \cite{LarOnt14} and as described above we then obtain $D_n(x)\geq0.065664679\ldots\cdot\log N$ for some $x\in[0,1]$ and $n\geq N$ by choosing $a=3.62079\ldots$. Consequently, Theorem~\ref{thm:main} follows.

So it remains to prove the two main auxiliary results, namely, that a stronger property (vi$^{\prime}$) for $f$ as well as the lower bound for $\int_{0}^{1}|\fss(x)|\ud x$ as stated above hold. This is carried out in the next section. For the proofs of these two results we will have to use some facts already obtained  in \cite{LarOnt14}, again.

\section{Proof of the auxiliary results}
\label{sec:auxiliary}

\begin{Lemma}
\label{lemma:bendcond}
 Let $j\in A_2$, i.e., $j=a^{t}-a^{t-1}+k$ for some integer $k$, $1\leq k<a^{t-1}$, and assume that $f(x)=\max_{n\in A_2}D_n(x)-\min_{n\in A_0}D_n(x)$ has a discontinuity in $x_j$. Let further $l_j$, $r_j\in A$ such that $\cP\cap(x_{l_j},x_{r_j})=\{x_j\}$. If there exists an $\ox\in(x_j,x_{r_j})$ such that, in $\ox$ $f$ has slope $s(\ox)>s_0-k$ then $f(\ux)\geq f(\ox)-s_0(\ox-\ux)$ for all $\ux\in[x_{l_j},x_j)$. Here, again,  $s_0 = a^{t-1} (a-2)$ as defined in property (iv) above.
\end{Lemma}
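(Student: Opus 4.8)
The plan is to read the hypothesis on the slope at $\ox$ as a statement about \emph{which} index attains the extrema defining $f$. Write $M(x)=\max_{n\in A_2}D_n(x)$ and $m(x)=\min_{n\in A_0}D_n(x)$, so that $f=M-m$. On a linear piece the slope of $M$ equals $-n_M$ for the current maximiser $n_M\in A_2$ and the slope of $m$ equals $-n_m$ for the current minimiser $n_m\in A_0$, hence $s(\ox)=n_m-n_M$. The arithmetic identity $s_0-k=a^{t-1}-j$, which follows at once from $s_0=-a^{t-1}(a-2)=2a^{t-1}-a^t$ and $j=a^t-a^{t-1}+k$, then rewrites the hypothesis $s(\ox)>s_0-k$ as $n_m-n_M>a^{t-1}-j$; since $n_m\le a^{t-1}$ this forces $n_M<j$. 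This is the decisive observation: the index carrying the maximum of $M$ at $\ox$ lies strictly below $j$.

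First I would exploit that $D_{n_M}$ does not jump at $x_j$. Indeed $D_n$ has an upward unit jump at $x_j$ only for $n\ge j$, so because $n_M<j$ and because $\cP\cap(x_{l_j},x_{r_j})=\{x_j\}$ contains no other points, $D_{n_M}$ is affine with slope $-n_M$ on the whole interval $(x_{l_j},x_{r_j})$. For an interior $\ux\in(x_{l_j},x_j)$ this gives, using $M(\ox)=D_{n_M}(\ox)$ and $M(\ux)\ge D_{n_M}(\ux)$,
\[ M(\ux)\ge D_{n_M}(\ux)=D_{n_M}(\ox)+n_M(\ox-\ux)=M(\ox)+n_M(\ox-\ux). \]
Thus, although $M$ jumps upward at $x_j$, its value just to the left already dominates $M(\ox)$ by the full linear amount $n_M(\ox-\ux)$.

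Next I would control the subtracted term. On $(x_{l_j},x_j)$ the function $m$ has no discontinuities (no $A_0$-indexed point lies there) and its slope stays in $[-a^{t-1},-1]$, so $0\le m(\ux)-m(\ox)\le a^{t-1}(\ox-\ux)$. Combining the two estimates,
\[ f(\ux)-f(\ox)\ge\bigl(n_M-a^{t-1}\bigr)(\ox-\ux). \]
Since $n_M\in A_2$ we have $n_M\ge a^t-a^{t-1}+1$, whence $n_M-a^{t-1}\ge a^t-2a^{t-1}+1=-s_0+1>-s_0$, and the claimed inequality $f(\ux)\ge f(\ox)-s_0(\ox-\ux)$ follows for every interior $\ux$, in fact with a slack of at least $(\ox-\ux)$.

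The step I expect to be delicate is the left endpoint $\ux=x_{l_j}$. There $D_{n_M}$ may carry an upward jump of size $\delta\in\{0,1\}$ (precisely when $l_j\le n_M$), so the affine comparison degrades to $M(x_{l_j})\ge M(\ox)+n_M(\ox-x_{l_j})-\delta$, and the crude slack $(\ox-x_{l_j})$ no longer obviously absorbs the lost $\delta$. To close this case I would finally bring in the so-far-unused hypothesis that $f$ \emph{is discontinuous at} $x_j$: a positive jump $J_j$ of $M$ at $x_j$ means some competitor $n^{\ast}\ge j$ satisfies $D_{n^{\ast}}(x_j^+)>D_{n_M}(x_j^+)$, while $D_{n_M}(\ox)\ge D_{n^{\ast}}(\ox)$ because $n_M$ is the maximiser at $\ox$; comparing these two affine functions forces their crossover at distance at least $J_j/(n^{\ast}-n_M)$ to the right of $x_j$, giving a quantitative lower bound on $\ox-x_j$, hence on $\ox-x_{l_j}$, in terms of the jump height. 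This is the mechanism that should pay for the endpoint loss $\delta$, and converting it into the exact inequality, together with the bookkeeping of the jumps of $M$ and $m$ at $x_{l_j}$, is where the real work of the proof lies.
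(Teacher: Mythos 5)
Your main argument is the paper's own proof in different notation. The paper likewise deduces from $s(\ox)>s_0-k$ that the $A_2$-maximiser $\on_2$ at $\ox$ satisfies $\on_2<j$ (using $\on_0\le a^{t-1}$ exactly as you use $n_m\le a^{t-1}$), then uses that $D_{\on_2}$ has no jump at $x_j$ to transport the value of the $A_2$-term back across the discontinuity, bounds the $A_0$-term via its index being at most $a^{t-1}$, and concludes from $\on_2-\un_0\ge a^t-2a^{t-1}+1>-s_0$ — the same slack of $\ox-\ux$ you find. (The paper's statement wobbles between $\min_{n\in A_0}$ and $\max_{n\in A_0}$; as your slope bound $[-a^{t-1},-1]$ shows, the argument is indifferent to this.) The only divergence is your final paragraph on the endpoint $\ux=x_{l_j}$: the paper does not treat this case at all — it asserts $D_{\on_2}(\ox)=D_{\on_2}(\ux)-\on_2(\ox-\ux)$ uniformly for $\ux\in[x_{l_j},x_j)$ — so you are being more scrupulous than the source. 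Note, however, that the repair you sketch would not close that case quantitatively: the crossover argument only yields $\ox-x_j\ge J_j/(n^{\ast}-n_M)$, which is of order $a^{-t}$, and a slack of that size cannot absorb a unit loss $\delta=1$. This is harmless for the paper, since the lemma is only invoked (through property (vi$^{\prime}$.c)) to constrain $\fss$ in integral estimates, where the value at a single point is immaterial; but if you insist on the closed-endpoint statement literally, you would need a different argument there, not the one you propose.
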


\begin{Remark*}
 The meaning of Lemma~\ref{lemma:bendcond} is illustrated in Figure~\ref{fig:bendcond}. Using the same notation  $f(\ux)$ lies above the line with slope $s_0$ reaching back from the point  $(\ox,f(\ox))$ (dashed)  in case the slope of $f$ (solid) becomes flatter than $s_0-k$. 
\end{Remark*}

\begin{figure}[!htbp]
 \centering
 \includegraphics[width=0.5\textwidth]{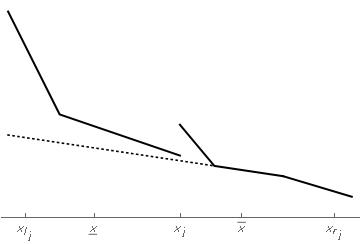}
 \caption{}
 \label{fig:bendcond}
\end{figure}

\begin{proof}[Proof of Lemma~\ref{lemma:bendcond}.]
 Let $\ux$, $\ox$ be like above with $s(\ox)>s_0-k$. We set $\on_i=n_i(\ox)$ and $\un_i=n_i(\ux)$ such that $D_{\on_i}(\ox)=\max_{n\in A_i}D_n(\ox)$ and $D_{\un_i}(\ux)=\max_{n\in A_i}D_n(\ux)$. So $f(\ox)=D_{\on_2}(\ox)-D_{\on_0}(\ox)$ and $f(\ux)=D_{\un_2}(\ux)-D_{\un_0}(\ux)$.
 
 First we show that $\on_2<j$. Indeed, we have
 \begin{equation*}
  a^{t-1}-\on_2\geq\on_0-\on_2=s(\ox)>s_0-k=-a^{t-1}(a-2)-k.
  \end{equation*}
Thus, $\on_2<a^{t}-a^{t-1}+k=j$.

Since $\cA_n$ does not change its value in $x_j$ for $n<j$, $D_{\on_2}$ does not have a jump in $x_j$.  Consequently, $D_{\on_2}(\ox)=D_{\on_2}(\ux)-\on_2(\ox-\ux)$. This observation yields
\begin{equation*}
 D_{\un_2}(\ux)-D_{\on_2}(\ox)\geq D_{\on_2}(\ux)-D_{\on_2}(\ox)=\on_2(\ox-\ux).
\end{equation*}
By the same argument we additionally obtain
\begin{equation*}
 D_{\un_0}(\ux)-D_{\on_0}(\ox)\leq D_{\un_0}(\ux)-D_{\un_0}(\ox)=\un_0(\ox-\ux).
 \end{equation*}
Alltogether
\begin{align*}
 f(\ux)-f(\ox)&=\left( D_{\un_2}(\ux)-D_{\on_2}(\ox) \right) - \left( D_{\un_0}(\ux)-D_{\on_0}(\ox) \right)\\
 &\geq(\on_2-\un_0)(\ox-\ux)\geq-s_0(\ox-\ux)
\end{align*}
and the result follows.
\end{proof}

In addition to the new property of $f$ obtained in Lemma~\ref{lemma:bendcond} one can easily convince oneself that $f$ is continuous at $x_1$. This result is not very effectful yet nice for calculation purposes. We will use this fact in the following concept of strict admissibility.

\begin{Definition}
 \label{def:strictadm}
 A function $g\!:[0,1]\rightarrow\bR$ is called strictly admissible if it satisfies conditions (i)--(v) and the following additional condition (vi$^{\prime}$):
 
 There exists a set $\Gamma=\{\xi_1,\xi_2,\ldots,\xi_{a^{t}-1}\}\subset\I$ such that:
 \begin{enumerate}[a)]
  \item If $g$ has a jump in $\xi$ then $\xi\in\Gamma$.
   \item There exists a set $\Gamma_1\subset\Gamma$, $|\Gamma_1|=a^{t-1}(a-2)$, such that  $f$ has a jump of height at least one in each $\xi\in\Gamma_1$.
   \item There exist $a^{t-1}-1$ further points $\{\xi_{k_1},\xi_{k_2},\ldots,\xi_{k_{a^{t-1}-1}}\}=:\Gamma_2$ with the following property:
   
   For each $1\leq n<a^{t-1}$ let $\xi_{l_n},\xi_{r_n}\in\Gamma\cup\{0,1\}$ such that $\Gamma\cap(\xi_{l_n},\xi_{r_n})=\{\xi_{k_n}\}$. Now, if there is an $\ox\in(\xi_{l_k},\xi_{r_k})$ with
 \begin{equation}
  \label{eqn:nr1}
  s(\ox)>s_0-n
 \end{equation}
then 
\begin{equation}
 \label{eqn:nr2}
 g(\ux)\geq g(\ox)-s_0(\ox-\ux)
 \end{equation}
for all $\ux\in[\xi_{l_n},\xi_{k_n})$. Here, $s(x)$ denotes the slope of $g$ in $x$.
 \end{enumerate}
\end{Definition}

From \cite{LarOnt14} and from Lemma~\ref{lemma:bendcond} it follows that $f$ is strictly admissible. The space of strictly admissible functions, again, is obviously closed with respect to pointwise convergence. Hence, there exists $\fss$ strictly admissible with
\begin{equation*}
 \int_{0}^{1}|f(x)|\ud x\geq \min_{g\text{ strictly admissible}} \int_{0}^{1}|g(x)|\ud x=\int_{0}^{1}|\fss(x)|\ud x.
\end{equation*}
We intend to estimate $\int_{0}^{1}|\fss(x)|\ud x$ from below. To this end we have to derive some properties of $\fss$.

\begin{Lemma}
 \label{lemma:zeros}
 Let $\fss$ have a discontinuity in $\gamma$. Then there exist two zeros $\alpha$, $\beta$ of $\fss$ with $\alpha<\gamma<\beta$ such that $\gamma$ is the only discontinuity in the interval $(\alpha,\beta).$
\end{Lemma}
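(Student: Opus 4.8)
The plan is to exploit the minimality of $\fss$ among strictly admissible functions by a variational argument. First I would reduce the lemma to a single local statement: at every discontinuity $\gamma$ of $\fss$ the left-hand value satisfies $\fss(\gamma)\le 0$ while the right-hand limit satisfies $\lim_{x\to\gamma^{+}}\fss(x)\ge 0$; in words, every (necessarily positive) jump of $\fss$ crosses the level zero. Granting this, the two zeros are produced as follows. By property (iv), $\fss$ is strictly decreasing on each interval of continuity, so on the continuity piece ending at $\gamma$ the function runs from its starting value down to $\fss(\gamma)\le 0$, and on the piece beginning at $\gamma$ it runs from $\lim_{x\to\gamma^{+}}\fss(x)\ge 0$ down to its terminal value. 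The starting value of the left piece equals the right-hand limit of $\fss$ at the preceding discontinuity (which is $\ge 0$ by the local statement applied there) or equals $\fss(0)=0$; dually, the terminal value of the right piece is the left-hand value at the following discontinuity (which is $\le 0$) or $\fss(1)=0$. Strict monotonicity on each piece then yields a zero $\alpha$ in the left-adjacent piece and a zero $\beta$ in the right-adjacent piece, and since these pieces abut $\gamma$ with no discontinuity in between, $\gamma$ is the only discontinuity of $\fss$ in $(\alpha,\beta)$.

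To prove the local statement I would argue by contradiction, constructing from $\fss$ a strictly admissible competitor $\tilde g$ with $\int_0^1|\tilde g|<\int_0^1|\fss|$. Suppose first that both one-sided values at $\gamma$ are positive, i.e.\ $\fss(\gamma)>0$. Then $\fss>0$ on a whole neighbourhood of $\gamma$ apart from the jump, and I would decrease $\fss$ there---for instance by steepening the descent on the left-adjacent piece so that it reaches a strictly smaller value at $\gamma$, or by translating downward a maximal block of pieces delimited by discontinuities---thereby strictly reducing the positive area. The symmetric case $\lim_{x\to\gamma^{+}}\fss(x)<0$ (both values negative) is handled by raising $\fss$ locally to reduce negative area. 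In either case one checks that the competitor still satisfies (i)--(v): the boundary values $\fss(0)=\fss(1)=0$ are untouched, a small perturbation keeps the slope inside $[-a^{t},s_0]$ and respects the bounded slope-variation of property (v), and the jump heights are only increased (lowering a left value or raising a right value enlarges the neighbouring jump), so the cardinality and height-$\ge 1$ requirements on $\Gamma_1$ in Definition~\ref{def:strictadm} remain valid.

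I expect the main obstacle to be the preservation of the strict-admissibility condition (vi$^{\prime}$), specifically the bend condition c) of Definition~\ref{def:strictadm} coming from Lemma~\ref{lemma:bendcond}. That condition ties the values $\fss(\ux)$ for $\ux$ lying to the left of a point of $\Gamma_2$ to the value and slope of $\fss$ at a point $\ox$ to its right through $\fss(\ux)\ge \fss(\ox)-s_0(\ox-\ux)$. Lowering $\fss$ to the left of such a point (the case $\fss(\gamma)>0$) tightens this inequality, and raising $\fss$ on the right (the case $\lim_{x\to\gamma^{+}}\fss(x)<0$) does the same, so a careless perturbation can destroy (vi$^{\prime}$). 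The competitor must therefore be localized so that no active bend inequality has its $\ux$-side lowered while its $\ox$-side is held fixed; choosing the translated block to be framed by the discontinuities that delimit the relevant $\Gamma_2$-relations is the natural device. There is moreover a favourable alternative: should a bend inequality be tight and genuinely block the perturbation, its tightness forces $\fss$ to fall back from $\ox$ with the extremal admissible average slope $s_0$, and this forced descent already carries $\fss$ across zero, delivering the required zero directly. Balancing these two mechanisms---showing that the perturbation can always be confined so as to keep (vi$^{\prime}$), or else that an obstructing bend relation itself produces the zero---is the heart of the argument; the remaining verifications of (i)--(v) are routine.
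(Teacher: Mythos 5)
Your overall strategy --- contradict the minimality of $\fss$ by a local perturbation, after reducing the lemma to the statement that every (positive) jump of $\fss$ crosses the level zero --- is exactly the route the paper takes (the paper phrases the reduction equivalently as ``between two successive discontinuities there is a zero''), and you correctly single out condition (vi$^{\prime}$.c) as the delicate point. The gap lies in the competitors you actually propose for the case $\fss(\gamma)>0$. Translating downward a block of pieces delimited by discontinuities does \emph{not} only increase jump heights: it lowers the right-hand limit of $\fss$ at the discontinuity bounding the block on the \emph{left}, hence shrinks that jump, which is fatal when that jump lies in $\Gamma_1$ with height exactly $1$ (condition (vi$^{\prime}$.b)); your parenthetical claim to the contrary covers only the jump at the right end of the block. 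Steepening the descent on the left-adjacent piece is blocked whenever that piece already has slope at the minimum $-a^{t}$ (property (iv)), and is further constrained by the bounded slope variation of property (v). When both obstructions occur simultaneously --- a left piece of extremal slope whose left-delimiting jump is a height-one $\Gamma_1$ jump --- you are left with no admissible perturbation. The paper's construction avoids both problems by a different device: it \emph{delays} the jump at $a_2=\gamma$, inserting after $a_2$ a short descending segment of slope $s=\min\{s^{*}(x):x\in(a_1,a_2+\delta_1)\}$ and re-attaching the jump at $a_2+\delta$ with its height preserved (indeed enlarged); the choice $\delta\leq -2\fss(a_2)/(s_1+s)$ guarantees the absolute integral strictly decreases. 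This construction touches neither the left piece nor the jump at $a_1$, and it is the missing idea.

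Your proposed escape for the $\Gamma_2$ obstruction is also not viable as stated: you suggest that a tight bend inequality forces $\fss$ to follow the $s_0$-line back from $\ox$ and that ``this forced descent already carries $\fss$ across zero.'' But the bend condition is a \emph{lower} bound, $\fss(\ux)\geq\fss(\ox)-s_0(\ox-\ux)$ with $s_0<0$, so the bounding line increases as one moves left from $\ox$ and lies entirely above zero whenever $\fss(\ox)\geq0$; tightness produces no zero. The paper instead gives a second explicit construction for the case $a_2\in\Gamma_2$ with an active bend condition: it keeps the jump at $a_2$ and replaces $\fss$ on $[a_2,\ox)$ by a lower profile assembled from the $s_0$-line through $(\ox,\fss(\ox))$ and a segment of slope $s^{*}(\ox)$, reducing the integral while respecting (vi$^{\prime}$.c) at both $a_1$ and $a_2$. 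Without these two constructions the variational argument does not close.
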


\begin{proof}
First of all, if $\gamma$ is the only point at which $\fss$ has a jump, the claim is fulfilled with $\alpha=0$ and $\beta=1$. Hence it suffices to show the following statement: Let $\fss$ have two successive discontinuities in, say, $a_1$ and $a_2$, $0<a_1<a_2<1$. Then $\fss$ has a zero in the interval $(a_1,a_2)$.

For contradiction we assume $\fss>0$ on $(a_1,a_2)$ (the case $\fss<0$ can be treated quite similarly). In what follows, we will construct a strictly admissible function $\tf$ such that
\begin{equation*}
 \int_{0}^{1}|\tf(x)|\ud x< \int_{0}^{1}|\fss(x)|\ud x,
\end{equation*}
which clearly contradicts the definition of $\fss$.

Naturally, we need to take special care  in constructing $\tf$ if either $a_1\in\Gamma_2$ or $a_2\in\Gamma_2$ which was defined in Definition~\ref{def:strictadm}. Moreover, if we manage to preserve the height of any existing jump in any other case then (vi$^{\prime}$.b) is automatically fulfilled for $\tf$.

First of all, we notice that $\fss$ cannot have a bend at, say, $y\in(a_1,a_2)$ such that the slope before the bend is greater than  afterwards. We say $\fss$ has a \emph{bend} in $y$ if $\fss$ is continuous in $y$ and if it changes its slope in $y$. Indeed, let $\delta>0$ such that the slope of $\fss$ is constant on $[y-\delta,y)$ as well as on $(y,y+\delta]$. Then, as can be seen in Figure~\ref{fig:property1_wrongbend}, we may interchange those pieces such that the resulting function $\tf$ (solid) remains continuous in $[y-\delta,y+\delta]$. Its absolute integral, however, is smaller than that of $\fss$ (dashed).
\begin{figure}[!htbp]
 \centering
 \includegraphics[width=0.5\textwidth]{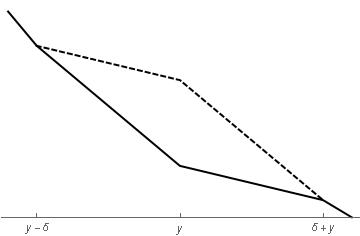}
 \caption{}
 \label{fig:property1_wrongbend}
\end{figure}
Thus, we need only consider bends where $\fss$ becomes flatter.

Let now $a_2\notin\Gamma_2$. We choose $\delta_1>0$ such that the slope of $\fss$ is a constant $s_1$ on $(a_2,a_2+\delta_1)$. Furthermore, we set
\begin{equation*}
 s=\min\left\{s^{*}(x):~x\in(a_1,a_2+\delta_1)\right\},
\end{equation*}
where $s^{*}$ denotes the slope of $\fss$ and where we define $s^{*}(a_2)$ as its left limit. Now, let $0<\delta\leq\min\{-2\fss(a_2)/(s_1+s),\delta_1\}$. With this choice of $\delta$ we have
\begin{equation*}
 \fss(a_2)+s\delta>-\fss(a_2+\delta).
\end{equation*}
In this case we may thus construct $\tf$ by  moving the discontinuity to $\tilde{a}_2=a_2+\delta$.  The missing part of $\tf$ on the left of $\tilde{a}_2$ of length $\delta$ is then chosen such that $\tf$ is continuous in $a_2$ and such that it has constant slope $s$.  This construction is visualized in Figure~\ref{fig:property1_notgamma2} (again $\fss$ is represented by the dashed and $\tf$ by the solid line). This choice for the slope guarantees that the height of the jump is preserved and, additionally, property~(vi$^{\prime}$.c) from Definition~\ref{def:strictadm}, too, cannot be violated by this construction if $a_1\in\Gamma_2$. 

\begin{figure}[!htbp]
 \centering
 \includegraphics[width=0.5\textwidth]{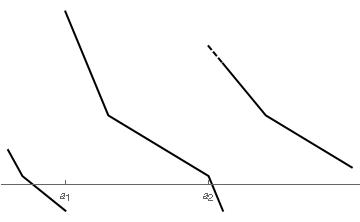}
 \caption{}
 \label{fig:property1_notgamma2}
\end{figure}

Certainly, the same construction also works if $a_2=\xi_{k_n}\in\Gamma_2$ for a suitable $k_{n}$ with $s^{*}\leq-a^{t-1}(a-2)-n$  between $a_2$ and the next discontinuity of $\fss$.

However, if there is some point $x>a_2$ before the next jump of $\fss$ with $s^{*}(x)>-a^{t-1}(a-2)-n$ we have to proceed differently. In this case, we keep the discontinuity at $a_2$ and take the smallest such  $x$, call it $\ox$. We define
\begin{equation*}
 \tf(x):=\left\{
 \begin{array}{{rl}}
  s_0(\ox-x)+\fss(\ox)&\quad \text{if }x\in[\ox-\delta,\ox),\\
  s^*(\ox)(\ox-\delta-x)+\tf(\ox-\delta) &\quad\text{if } x\in[a_2,\ox-\delta),\\
  \fss(x)&\quad\text{else, }
 \end{array}\right.
\end{equation*}
where $\delta>0$ is such that we still have a positive jump in $a_2$. Recall that a discontinuity always constitutes a positive jump, hence this is possible. Figure~\ref{fig:property1_gamma2} shows $\tf$ (solid) as well as $\fss$ (dashed) in this case. Notice that, again,
\begin{equation*}
 \int_{0}^{1}|\tf(x)|\ud x< \int_{0}^{1}|\fss(x)|\ud x
 \end{equation*}
 and that (vi$^{\prime}$.c) from Definition~\ref{def:strictadm} is not violated for $a_2$. Additionally, the condition on $\delta$ guarantees that (vi$^{\prime}$.c)  is not violated for $a_1$ if $a_1\in\Gamma_2$ either. Moreover, we need not take care of the height of the jump in $a_2$, since $\Gamma_1$ and $\Gamma_2$ are disjoint.  The dotted line represents the line with slope $s_0$ reaching back from $\{\ox,\fss(\ox)\}$ which occurs in Definition~\ref{def:strictadm}.
 \end{proof}

Thus, $\fss$ consists of parts $Q$, each of which is defined on an interval $[\alpha,\beta]$ with $\fss(\alpha)=\fss(\beta)=0$ and such that there is exactly one discontinuity in $(\alpha,\beta)$, see Figure~\ref{fig:q}.

\begin{figure}
\begin{minipage}[!hbtp]{0.45\textwidth}
\centering
 \includegraphics[width=\textwidth]{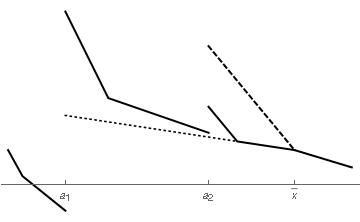}
 \caption{}
 \label{fig:property1_gamma2}
  \end{minipage}
  \hfill
  \begin{minipage}[!hbtp]{0.45\textwidth}
 \centering
 \includegraphics[width=\textwidth]{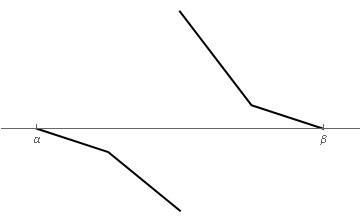}
 \caption{}
 \label{fig:q}
  \end{minipage}
 \end{figure}

In the following we determine the number of such $Q$'s for $\fss$.

\begin{Lemma}
 \label{lemma:numberjumps}
 The function $\fss$ has exactly $a^t-1$ discontinuities.
\end{Lemma}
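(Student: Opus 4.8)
The plan is to establish two matching bounds. For the upper bound I would argue directly from strict admissibility: since $\fss$ satisfies (vi$^{\prime}$), fix a set $\Gamma$ as in Definition~\ref{def:strictadm}. By (vi$^{\prime}$.a) every discontinuity of $\fss$ occurs at a point of $\Gamma$, and $|\Gamma|=a^{t}-1$ by construction. Hence $\fss$ has \emph{at most} $a^{t}-1$ discontinuities, and the whole content of the Lemma is to rule out that it has strictly fewer.

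For the lower bound I would exploit the minimality of $\fss$ together with the decomposition from Lemma~\ref{lemma:zeros}: the number of discontinuities equals the number of parts $Q$, each part carrying exactly one positive jump. Since by property (iv) the slope is everywhere at most $s_0<0$, on every continuous piece $\fss$ is strictly decreasing, so the only mechanism by which $\fss$ can move back towards the axis is an upward jump. This strongly suggests that additional jumps can only help to decrease $\int_{0}^{1}|\fss(x)|\ud x$. Concretely, I would assume for contradiction that $\fss$ has at most $a^{t}-2$ discontinuities. Then, since jumps lie in $\Gamma$ and $|\Gamma|=a^{t}-1$, at least one point $\xi\in\Gamma$ fails to be a genuine jump, and I would construct a strictly admissible $\tf$ with $\int_{0}^{1}|\tf(x)|\ud x<\int_{0}^{1}|\fss(x)|\ud x$ by inserting one extra positive jump on a sufficiently long continuous descent of $\fss$ and redistributing the slopes on either side. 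As in the constructions in the proof of Lemma~\ref{lemma:zeros}, converting part of a negative descent into a positive jump pulls $\fss$ up towards $0$ and strictly lowers the absolute integral, which contradicts the definition of $\fss$ and forces the number of discontinuities to be exactly $a^{t}-1$.

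The bookkeeping behind this insertion is where I expect the main difficulty to lie. One must choose the jump height and the two adjacent slopes so that $\tf$ stays inside the band $[-a^{t},s_0]$ of property (iv), respects the flatness restriction (v), and --- most delicately --- does not violate the bend condition (vi$^{\prime}$.c); this forces exactly the kind of case distinction (whether the new point belongs to $\Gamma_2$ or not, and whether the slope beyond the new jump is flatter than $s_0-n$ or not) that already appears in Lemma~\ref{lemma:zeros}. Equivalently, the argument must show that \emph{every} point of $\Gamma$ is realized as an actual jump of $\fss$, and here the accounting has to close precisely: the $a^{t-1}(a-2)$ forced jumps of height at least $1$ in $\Gamma_1$, the $a^{t-1}-1$ points of $\Gamma_2$, and the remaining $a^{t-1}$ points of $\Gamma$ must together account for all $a^{t}-1$ discontinuities, so one has to verify that none of the $2a^{t-1}-1$ non-forced points can be dropped without leaving room to decrease the integral.
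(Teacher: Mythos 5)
Your upper bound and your starting point for the lower bound match the paper: assume fewer than $a^t-1$ discontinuities, extract a point $\xis\in\Gs$ at which $\fss$ is continuous (necessarily outside $\Gs_1$), and use this free slot to license one additional jump in a perturbed function $\tf$ with smaller absolute integral. But there is a genuine gap at the heart of your construction, namely the claim that one can always ``insert one extra positive jump on a sufficiently long continuous descent and redistribute the slopes on either side.'' Since $\fss(0)=\fss(1)=0$, the total height of all jumps must exactly equal the total descent $\int_0^1(-s^{*}(x))\ud x$, and property (iv) caps the descent rate at $a^t$. So a new jump can only be inserted if there is slack somewhere: either a stretch where the slope is strictly greater than $-a^t$ (which can be steepened to pay for the new jump), or an existing jump whose height can be reduced. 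Your proposal establishes neither. In particular it does not rule out the scenario in which $\fss$ descends at the maximal slope $-a^t$ everywhere and the budget $a^t$ of total jump height is absorbed by fewer, taller jumps (e.g.\ jumps of height greater than $1$ at $\Gs_1$-points); in that configuration there is nothing to ``redistribute'' and the direct insertion fails. A second, smaller issue: ``converting descent into a jump pulls $\fss$ up towards $0$'' is only a gain on the parts where $\fss<0$; on the parts where $\fss>0$ the beneficial move is the opposite one (steepen the descent to push $\fss$ down toward $0$, then jump back up to rejoin), and the two signs require separate constructions.

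The paper closes exactly this gap with an intermediate classification step that your proposal is missing. Using the free slot $\xis$, it shows that every actual jump $\gamma$ of $\fss$ must be a jump of height \emph{exactly} $1$ at a point of $\Gs_1$: if $\gamma\in\Gs_0$ or $\Gs_2$, or if $\gamma\in\Gs_1$ with height greater than $1$, one can split off part of the jump to a new location $\txi$ just to its left (inserted into $\Gamma$ in place of $\xis$), with a piece of minimal slope in between, strictly decreasing $\int_0^1|\fss|\ud x$; the $\Gs_2$ case is handled by noting that Lemma~\ref{lemma:zeros} isolates $\gamma$ between two zeros, so condition (vi$^{\prime}$.c) is vacuous there. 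Only after this step does the counting close: the total jump height is then $a^{t-1}(a-2)<a^t$, so the slope cannot be $-a^t$ everywhere (else $\fss(1)=a^{t-1}(a-2)-a^t<0$), and on an interval of non-maximal slope the steepen-and-insert construction finally produces the contradiction. Without this classification of the existing jumps, your insertion step cannot be carried out in general.
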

\begin{proof}
 Assume that the total number of discontinuities of $\fss$ is less than $a^{t}-1$. Then, in the following, we will define a strictly admissible function $\tf$ from $\fss$ whose absolute integral is smaller than that of $\fss$. Let $\Gs$ be the set $\Gamma$ from property (vi$^{\prime}$) for the function $\fss$.
 
 By assumption there is a $\xis\in\Gs$ such that $\fss$ is continuous in $\xis$. The definition of $\Gs_1$ (i.e., the set $\Gamma_1$ for $\fss$) guarantees $\xis\notin\Gs_1$. Assume that $\xis\in\Gs_2$ (the case $\xis\in\Gs_0:=\Gs\setminus(\Gs_1\cup\Gs_2)$ can be treated quite analogously).
 
 Now choose $\gamma\in\Gs$ such that $\fss$ has a jump in $\gamma$. We show that $\gamma\in\Gs_1$ and that $\fss$ has a jump of height 1 in $\gamma$ (case d) below). Indeed, à priori we are in one of the following four cases:
 \begin{enumerate}[a)]
 \item $\gamma\in\Gs_0$,
 \item $\gamma\in\Gs_2$, 
 \item $\gamma\in\Gs_1$ with a jump of height greater than 1, or
 \item $\gamma\in\Gs_1$ with a jump of height exactly equal to 1 in $\gamma$.
\end{enumerate}
Assume that $\gamma\in\Gs_2$ (case b). By Lemma~\ref{lemma:zeros} $\gamma$ is isolated by two successive zeros of $\fss$. Hence (\ref{eqn:nr2}) from property (vi$^{\prime}$) cannot hold, and therefore (\ref{eqn:nr1}) from the same property does not hold either. Consequently, (see Fig.~\ref{fig:property2_gamma2}) we can take a point $\txi$ on the left of $\gamma$ and insert a short piece of minimal slope on $[\txi,\gamma)$ without interferring with property (vi$^{\prime}$.c). Again, the dashed line represents $\fss$ and the solid one the resulting new function $\tf$. The new set $\tG$ is the set $\Gs$ with $\xis$ replaced by $\txi$.

 \begin{figure}[!htbp]
 \centering
 \includegraphics[width=0.5\textwidth]{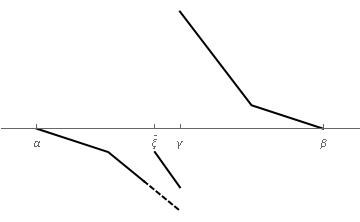}
 \caption{}
 \label{fig:property2_gamma2}
\end{figure}

This construction also works for case a) in the same way, and, with some special care, i.e., the jump of $\tf$ in $\gamma$ maintains a height of at least one, for case c) too.
 
 Consequently, $\fss$ can only have the $a^{t-1}(a-2)$ jumps at the positions given by $\Gs_1$. All these jumps have height exactly equal to one and there are absolutely no further jumps. Obviously, $\fss$ cannot have slope $-a^{t}$ everywhere, since then
 \begin{equation*}
  0>a^{t-1}(a-2)-a^{t}=\fss(1),
 \end{equation*}
 a contradiction to property (i). Thus, there exists an interval $[\delta_1,\delta_2]$ such that $\fss>0$ (or $\fss<0$) on $[\delta_1,\delta_2]$ and its slope is greater than $-a^{t}$. We choose $\delta^{\prime}\in(\delta_1,\delta_2)$ sufficiently close to $\delta_1$ (or to $\delta_2$) and define
 \begin{equation*}
 \tf(x)=\left\{
 \begin{array}{rl}
  \fss(\delta_1)-a^{t}(x-\delta_1)\quad&\text{if }x\in(\delta_1,\delta^{\prime}],\\
  \fss(x)\quad&\text{else,}
 \end{array}
\right. 
\end{equation*}
or
\begin{equation*}
 \tf(x)=\left\{
 \begin{array}{rl}
  \fss(\delta_2)-a^{t}(x-\delta_2)\quad&\text{if }x\in(\delta^{\prime},\delta_2],\\
  \fss(x)\quad&\text{else,}
 \end{array}
\right. 
\end{equation*}
respectively. See Figures~\ref{fig:property2_fspositive} and \ref{fig:property2_fsnegative}.

 \begin{figure}
\begin{minipage}[!hbtp]{0.45\textwidth}
 \centering
  \includegraphics[width=\textwidth]{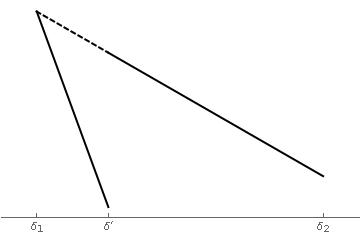}
  \caption{Case $\fss>0$ on $[\delta_1,\delta_2]$}
  \label{fig:property2_fspositive}
  \end{minipage}
  \hfill
  \begin{minipage}[!hbtp]{0.48\textwidth}
  \centering
  \includegraphics[width=\textwidth]{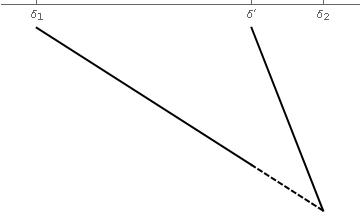}
  \caption{Case $\fss<0$ on $[\delta_1,\delta_2]$}
  \label{fig:property2_fsnegative}
  \end{minipage}
 \end{figure}
 \end{proof}
 
 From the above results we obtain that $\fss$ has to be of the following form: It divides $\I$ into $a^{t}-1$ parts $[\alpha,\beta)$ with $\fss(\alpha)=\fss(\beta)=0$, and $\fss$ has exactly one discontinuity $\gamma\in(\alpha,\beta)$. We say that $[\alpha,\beta)$ is of type $Q_i$ if $\gamma\in\Gs_i$ for $i=0,1,2$.
 
 From \cite{LarOnt14}, equation (2), we know that, for an interval of type $Q_0$ (this corresponds to the type $\Qpp$ in the abovementioned paper), we have
 \begin{equation*}
  \int_{\alpha}^{\beta}\left|\fss(x)\right|\ud x\geq\chi^2~\frac{a^{t-1}(a-2)}{4},\qquad \chi=\beta-\alpha,
 \end{equation*}
and from \cite[Lemma 2.12]{LarOnt14} and the considerations following the proof of this lemma we know that for an interval of type $Q_1$ (this corresponds to the type $\Qp$ in the abovementioned paper) we have
\begin{equation*}
 \int_{\alpha}^{\beta}\left|\fss(x)\right|\ud x\geq\frac{\chi\left(4-a^{t-1}\chi\right)}{16},\qquad \chi=\beta-\alpha.
\end{equation*}
Moreover, we know from \cite[Lemma 2.10]{LarOnt14} that for $\fss$ all $a^{t-1}$ intervals $Q_0$ have the same length and all $a^{t}-2a^{t-1}$ intervals $Q_1$ have the same length.

\begin{Lemma}
 \label{lemma:qpp}
 For $1\leq n\leq a^{t-1}-1$ let $Q_2^{(n)}$ be given by the interval $[\alpha,\beta)$. Then we have
 \begin{equation*}
  \int_{Q_2^{(n)}} \left|\fss(x)\right|\ud x\geq(\beta-\alpha)^{2}\frac{|s_0|(n+|s_0|)}{2(n+2|s_0|)}.
 \end{equation*}
\end{Lemma}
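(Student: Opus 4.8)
The plan is to exploit the rigid shape that Lemmas~\ref{lemma:zeros} and \ref{lemma:numberjumps} force on $\fss$ over a single part, read off two one-sided slope bounds on the two sides of the jump, and combine them through an elementary constrained optimization whose optimum reproduces the stated constant.

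First I would fix the part $Q_2^{(n)}=[\alpha,\beta)$ together with its unique discontinuity $\gamma=\xi_{k_n}\in\Gs_2$, and pin down the sign pattern. Since $\fss$ is piecewise monotonically decreasing away from its jumps (property (ii)), starts and ends at $0$ on $[\alpha,\beta]$, and has a single positive jump at $\gamma$, it is forced that $\fss\le 0$ on $[\alpha,\gamma]$ and $\fss\ge 0$ on $[\gamma,\beta]$. Writing $p=\gamma-\alpha$ and $q=\beta-\gamma$, the slope bound from property (iv) on the left piece (integrating $-\fss'\ge|s_0|$ back from $\gamma$, where $\fss(\gamma)\le 0$) gives $-\fss(x)\ge|s_0|(\gamma-x)$, hence $\int_\alpha^\gamma|\fss|\ud x\ge\frac{|s_0|}{2}\,p^2$.

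The crux is to improve the slope bound on the right piece from $|s_0|$ to $|s_0|+n$, i.e.\ to show that $\fss$ has slope $\le s_0-n$ throughout $(\gamma,\beta)$; this is the content that condition (vi$'$.c) was designed to supply, and it is the step I expect to carry the weight of the argument. I would argue by contradiction: suppose some $\ox\in(\gamma,\beta)$ has $s(\ox)>s_0-n$. By Lemma~\ref{lemma:numberjumps} the set $\Gs$ coincides with the set of discontinuities of $\fss$, so by Lemma~\ref{lemma:zeros} there is no point of $\Gs$ strictly inside $(\gamma,\beta)$; consequently the isolating interval $(\xi_{l_n},\xi_{r_n})$ of $\gamma$ contains all of $(\alpha,\beta)$, and (vi$'$.c) does apply to this $\ox$. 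It then yields $\fss(\ux)\ge\fss(\ox)-s_0(\ox-\ux)=\fss(\ox)+|s_0|(\ox-\ux)$ for every $\ux\in[\alpha,\gamma)$. But the right-hand side is strictly positive, since $\fss(\ox)>0$ and $\ox>\ux$, whereas $\fss(\ux)\le 0$ on the left piece — a contradiction. Hence no such flat point exists, the slope on $(\gamma,\beta)$ stays $\le s_0-n$, and integrating $-\fss'\ge|s_0|+n$ back from $\beta$ gives $\fss(x)\ge(|s_0|+n)(\beta-x)$, so that $\int_\gamma^\beta|\fss|\ud x\ge\frac{|s_0|+n}{2}\,q^2$.

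Finally I would combine the two estimates into $\int_{Q_2^{(n)}}|\fss|\ud x\ge\frac{|s_0|}{2}p^2+\frac{|s_0|+n}{2}q^2$ subject to the constraint $p+q=\beta-\alpha$. Minimizing the right-hand side over this constraint (the optimum sits at $p=\frac{(|s_0|+n)(\beta-\alpha)}{n+2|s_0|}$ and $q=\frac{|s_0|(\beta-\alpha)}{n+2|s_0|}$) produces exactly $(\beta-\alpha)^2\,\frac{|s_0|(n+|s_0|)}{2(n+2|s_0|)}$, as required. The only genuinely delicate point is the slope improvement on the right piece via (vi$'$.c); the two integral bounds and the closing minimization are routine.
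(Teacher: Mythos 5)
Your proof is correct and supplies exactly the argument the paper compresses into ``the remark preceding the lemma and simple calculations'': the slope bound $s_0$ to the left of the jump, the improved bound $s_0-n$ to the right of the jump forced by (vi$^{\prime}$.c) through the sign contradiction (the line of slope $s_0$ reaching back from a point where $\fss>0$ would force $\fss>0$ on $[\alpha,\gamma)$, where in fact $\fss\leq0$), and the minimization of $\tfrac{|s_0|}{2}p^2+\tfrac{|s_0|+n}{2}q^2$ subject to $p+q=\beta-\alpha$, which yields precisely $(\beta-\alpha)^2\tfrac{|s_0|(n+|s_0|)}{2(n+2|s_0|)}$. One cosmetic slip: on the left piece the pointwise bound should be anchored at $\alpha$, where $\fss$ vanishes, giving $-\fss(x)\geq|s_0|(x-\alpha)$ rather than $|s_0|(\gamma-x)$; the resulting integral bound $\tfrac{|s_0|}{2}p^2$ is unchanged.
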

\begin{proof}
 This follows from the remark preceeding Lemma~\ref{lemma:qpp} and simple calculations.
\end{proof}
To finish the proof of our theorem we finally show:
\begin{Lemma}
 \label{lemma:lowerbound}
 For all $3\leq a\leq3.7$ we have
 \begin{equation*}
  \int_{0}^{1}\left|\fss(x)\right|\ud x \geq \frac{(a-2)\left(12a+9+(a-2)(4a-3)\log\left(1+\frac{1}{a-2}\right)\right)}{16\left(a-\frac{1}{2}\right)^2\left(3+(a-2)\log\left(1+\frac{1}{a-2}\right)\right)}.
 \end{equation*}
\end{Lemma}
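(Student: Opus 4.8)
Throughout write $u:=a^{t-1}$, so $|s_0|=u(a-2)$ and the number of $Q_1$-pieces equals $|s_0|$. By Lemmas~\ref{lemma:zeros} and~\ref{lemma:numberjumps} and the counting recorded before Lemma~\ref{lemma:qpp}, the minimiser $\fss$ cuts $\I$ into $u$ pieces of type $Q_0$ of one common length $\chi_0$, $|s_0|$ pieces of type $Q_1$ of one common length $\chi_1$, and the $u-1$ pieces $Q_2^{(n)}$ ($1\le n\le u-1$) of lengths $\chi_2^{(n)}$, with $u\chi_0+|s_0|\chi_1+\sum_{n}\chi_2^{(n)}=1$. Inserting the three per-piece estimates gives the master inequality
\begin{equation*}
 \int_0^1\!|\fss(x)|\ud x\ge u\,\frac{|s_0|}{4}\,\chi_0^2+|s_0|\,\frac{\chi_1(4-u\chi_1)}{16}+\sum_{n=1}^{u-1}\frac{|s_0|(n+|s_0|)}{2(n+2|s_0|)}\big(\chi_2^{(n)}\big)^2,
\end{equation*}
and the plan is to bound the right-hand side from below by optimising the lengths under their constraints and then letting $t\to\infty$.

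I would first fix the total $Q_1$-length $Y:=|s_0|\chi_1$ and minimise the $Q_0$- and $Q_2$-contributions over $\chi_0,\chi_2^{(n)}\ge 0$ subject to $u\chi_0+\sum_n\chi_2^{(n)}=1-Y$. As this partial objective is a positive-definite quadratic, one Lagrange multiplier $\lambda$ yields the minimiser $\chi_0=2\lambda/|s_0|$, $\chi_2^{(n)}=\lambda(n+2|s_0|)/\big(|s_0|(n+|s_0|)\big)$ (all free-jump pieces thereby getting the common jump height $2\lambda$). Substituting back, both the $Q_0+Q_2$ value and the length constraint acquire the same factor $\tfrac{3}{a-2}+\sum_{n=1}^{u-1}\tfrac{1}{n+|s_0|}$: the value is $\tfrac12\lambda^2\big(\tfrac{3}{a-2}+\sum_{n}\tfrac{1}{n+|s_0|}\big)$ and the constraint reads $\lambda\big(\tfrac{3}{a-2}+\sum_n\tfrac1{n+|s_0|}\big)=1-Y$ (both up to terms vanishing as $t\to\infty$). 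The only non-elementary point is the sum $\sum_{n=1}^{u-1}\tfrac{1}{n+|s_0|}$, which I would trap between integrals of $1/(x+|s_0|)$ to obtain the limit $\log\big(1+\tfrac1{a-2}\big)$ (recall $|s_0|=u(a-2)$ and $n\le u-1$). Writing $\widetilde K:=\tfrac{3}{a-2}+\log\big(1+\tfrac1{a-2}\big)$ and eliminating $\lambda=(1-Y)/\widetilde K$, the whole $Q_0+Q_2$ contribution collapses to $(1-Y)^2/(2\widetilde K)$.

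Together with the $Q_1$-estimate (whose contribution is $\tfrac{Y}{4}-\tfrac{Y^2}{16(a-2)}$) this gives
\begin{equation*}
 \int_0^1\!|\fss(x)|\ud x\ge T(Y):=\frac{(1-Y)^2}{2\widetilde K}+\frac{Y}{4}-\frac{Y^2}{16(a-2)} .
\end{equation*}
Now $Y$ is not free: the slope bound~(iv) and especially the slope-continuity condition~(v) — the shape information behind \cite[Lemma~2.12]{LarOnt14} — keep the $Q_1$-pieces from being too short and force $Y\ge Y_\ast:=\tfrac{2(a-2)}{2a-1}$. For $a\in(3,3.7]$ the function $T$ is convex with unconstrained minimiser $<Y_\ast$, so $\min_{Y\ge Y_\ast}T=T(Y_\ast)$; this is exactly where the hypothesis $a\le 3.7$ enters, since for larger $a$ the constraint ceases to bind. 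Finally $1-Y_\ast=\tfrac{3}{2a-1}$ and $\tfrac{Y_\ast}{4}-\tfrac{Y_\ast^2}{16(a-2)}=\tfrac{(a-2)(4a-3)}{4(2a-1)^2}$, so $T(Y_\ast)=\frac{18+(a-2)(4a-3)\widetilde K}{4(2a-1)^2\,\widetilde K}$; substituting $(a-2)\widetilde K=3+(a-2)\log(1+\tfrac1{a-2})$, using $4(2a-1)^2=16\big(a-\tfrac12\big)^2$, and expanding the numerator to $12a+9+(a-2)(4a-3)\log(1+\tfrac1{a-2})$ turns this into precisely the asserted bound.

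The real obstacle is the lower bound $Y\ge Y_\ast$: a naive minimisation of the master inequality over all lengths ignores~(v), allows arbitrarily short $Q_1$-pieces, and produces a strictly smaller, insufficient constant; the content lies in showing that slope-continuity cannot be met with shorter $Q_1$-pieces. Once this is granted, everything else is routine — the harmonic-sum-to-logarithm limit, the convexity of $T$ on $(3,3.7]$, and the closing algebraic simplification — and the positive-definiteness of the $Q_0/Q_2$ quadratic already guarantees that the Lagrange point of the second paragraph is its minimiser, so no separate second-order test is required.
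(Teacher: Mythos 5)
Your proposal is correct and follows essentially the same route as the paper: the same master inequality over the $Q_0$, $Q_1$, $Q_2^{(n)}$ pieces, the same Lagrangian equalization of the $Q_0$/$Q_2$ lengths, the same integral bound $\log\left(1+\frac{1}{a-2}\right)$ for the harmonic-type sum, and the same reduction to a convex quadratic in the $Q_1$-length whose unconstrained minimiser lies below the admissible lower bound $\chi_1\geq\frac{a^{1-t}}{a-1/2}$ from \cite[Lemma 2.13]{LarOnt14}, so that the minimum is attained at that boundary value. Your two-stage optimisation (first over $\chi_0,\chi_2^{(n)}$ with $Y=|s_0|\chi_1$ fixed, then over $Y$) is only a cosmetic reorganisation of the paper's single Lagrangian step followed by minimising $p(\chi_1)$, and your closing algebra reproduces the stated constant exactly.
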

\begin{proof}
 Due to Lemma~\ref{lemma:qpp} and the remarks preceeding it we have to minimize the right hand-side of
 \begin{multline*}
  \int_{0}^{1}\left|\fss(x)\right|\ud x \geq a^{t-1} \cdot\chi_0^2~\frac{a^{t-1}(a-2)}{4}+a^{t-1} (a-2)\cdot\frac{\chi_1\left(4-a^{t-1}\chi_1\right)}{16}
  +\\ \quad+
  \sum_{n=1}^{a^{t-1}-1}\left(\chi_{2}^{(n)}\right)^2\frac{|s_0|(n+|s_0|)}{2(n+2|s_0|)}\\
  =: a^{t-1} \cdot\chi_0^2 \tA_0+a^{t-1}(a-2) \cdot\frac{\chi_1\left(4-a^{t-1}\chi_1\right)}{16}+\sum_{n=1}^{a^{t-1}-1}\left(\chi_{2}^{(n)}\right)^2\tA_n
 \end{multline*}
with respect to $\chi_0,\chi_1,\chi_2^{(n)}\geq0$ (these quantities denote the lengths of the intervals $Q_0,Q_1,Q_2^{(n)}$) under the constraint
\begin{equation*}
 a^{t-1}\chi_0+a^{t-1}(a-2) \chi_1+ \sum_{n=1}^{a^{t-1}-1}\chi_2^{(n)}=1.
\end{equation*}
The Lagrangian approach immediately implies $\tA_0\chi_0=\tA_n\chi_2^{(n)}$ for all $1\leq n< a^{t-1}$. The constraint therefore yields 
\begin{equation*}
 \chi_0=\frac{1-a^{t-1}(a-2)\chi_1}{a^{t-1}+\sum_{n=1}^{a^{t-1}-1}\frac{\tA_0}{\tA_n}}.
\end{equation*}
Moreover, the denominator in the above equation simplifies to
\begin{multline*}
 a^{t-1}+\sum_{n=1}^{a^{t-1}-1} \frac{\tA_0}{\tA_n}=a^{t-1}+ \sum_{n=1}^{a^{t-1}-1}\left( 1-\frac{n}{2(|s_0|+n)} \right)
 =\\= 2a^{t-1}-1-\frac{1}{2} \sum_{n=|s_0|+1}^{a^{t-1}-1+|s_0|}\left(1-\frac{|s_0|}{n}\right)
  = \frac{1}{2}\left(3a^{t-1}-1+|s_0|  \sum_{n=|s_0|+1}^{a^{t-1}-1+|s_0|}\frac{1}{n} \right).
\end{multline*}
The latter sum  can be bounded by $\log(1+1/(a-2))$ from above. We summarize our intermediate findings and obtain
\begin{multline*}
 \int_{0}^{1}|\fss(x)|\ud x\geq\\
 \frac{(a-2)\left( 1-a^{t-1}(a-2)\chi_1 \right)^2}{2\left(3+(a-2)\log\left(1+\frac{1}{a-2}\right)\right)}+a^{t-1}(a-2)\frac{\chi_1(4-a^{t-1}\chi_1)}{16} =:p(\chi_1).
\end{multline*}
Now, our goal is to minimize the function $p$. We immediately see that $p$ is a polynomial of degree two and its leading coefficient is positive for all $3<a\leq3.7$. Thus, it attains its minimum at its only critical point
\begin{equation*}
 \cc=a^{1-t}\frac{2 \left( 4a-11-(a-2)\log\left(1+\frac{1}{a-2}\right) \right)}{29+8a(a-4)-(a-2)\log\left(1+\frac{1}{a-2}\right)}.
\end{equation*}
On the other hand, from the proof of Lemma 2.13 in \cite{LarOnt14} we know that we have the following bounds for $\chi_1$
\begin{equation*}
 \cm:=\frac{a^{1-t}}{a-\frac{1}{2}}\leq\chi_1\leq \frac{a^{1-t}}{a-\frac{3}{2}}.
\end{equation*}
We will show that $\cc\leq\cm$. Indeed, it can easily be  verified that the denominator of $\cc$ is positive.  Thus, $\cc>\cm$ iff
\begin{equation*}
 0 > 
 3a-9-(a-1)(a-2)\log\left(1+\frac{1}{a-2}\right)
 =:q(a).
\end{equation*}
We observe that $q(3.7)<0$ and, additionally, that $q'(a)>0$ for all $a\in(3,3.7]$. Hence $\chi_1=\frac{a^{1-t}}{a-\frac{1}{2}}$ and by inserting this value into the function $p$ the result follows.
\end{proof}

\bibliography{mybib}

\begin{thebibliography}{1}

\bibitem{FauGoo92}
H.~Faure.
\newblock Good permutations for extreme discrepancy.
\newblock {\em J. Number Theory}, 42(1):47--56, 1992.

\bibitem{KuiUni74}
L.~Kuipers and H.~Niederreiter.
\newblock {\em Uniform distribution of sequences}.
\newblock John Wiley, New York, 1974.

\bibitem{LarOnt14}
G.~Larcher.
\newblock On the star discrepancy of sequences in the unit interval.
\newblock {\em J. Complexity}, 31(3):474--485, 2015.

\bibitem{LiaDis79}
P.~Liardet.
\newblock Discrepance sur le cercle.
\newblock In {\em Primaths I}, pages 7--11. Univ. Marseille, 1979.

\bibitem{NieRan92}
H.~Niederreiter.
\newblock {\em Random number generation and quasi-{M}onte {C}arlo methods},
  volume~63 of {\em CBMS-NSF Regional Conference Series in Applied
  Mathematics}.
\newblock SIAM, Philadelphia, 1992.

\bibitem{OstRec08}
V.~Ostromoukhov.
\newblock Recent progress in improvement of extreme discrepancy and star
  discrepancy of one-dimensional sequences.
\newblock In {\em Monte {C}arlo and quasi-{M}onte {C}arlo methods 2008}, pages
  561--572. Springer, Berlin, 2009.

\bibitem{SchIrr72}
W.~M. Schmidt.
\newblock Irregularities of distribution. {VII}.
\newblock {\em Acta Arith.}, 21:45--50, 1972.

\bibitem{TijSeq80}
R.~Tijdeman and G.~Wagner.
\newblock A sequence has almost nowhere small discrepancy.
\newblock {\em Monatsh. Math.}, 90(4):315--329, 1980.

\end{thebibliography}
\bibliographystyle{plain}
 \end{document}